\begin{document}

\theoremstyle{plain}
\newtheorem{thm}{Theorem}[section]
\newtheorem{cor}[thm]{Corollary}
\newtheorem{con}[thm]{Conjecture}
\newtheorem{cla}[thm]{Claim}
\newtheorem{lm}[thm]{Lemma}
\newtheorem{prop}[thm]{Proposition}
\newtheorem{example}[thm]{Example}

\theoremstyle{definition}
\newtheorem{dfn}[thm]{Definition}
\newtheorem{alg}[thm]{Algorithm}
\newtheorem{prob}[thm]{Problem}
\newtheorem{rem}[thm]{Remark}

\renewcommand{\baselinestretch}{1.1}

\title{\bf An analogue of the Erd{\H o}s Matching Conjecture for permutations with fixed number of cycles}
\author{
Cheng Yeaw Ku
\thanks{ Division of Mathematical Sciences, School of Physical and Mathematical Sciences,
Nanyang Technological University, 21 Nanyang link, Singapore 637371, Singapore. E-mail: cyku@ntu.edu.sg} \and Kok
Bin Wong \thanks{
Institute of Mathematical Sciences, Faculty of Science, Universiti Malaya, 50603
Kuala Lumpur, Malaysia.\newline E-mail:
kbwong@um.edu.my.} } \maketitle

\begin{abstract}\noindent
Let $S_{n}$ denotes the set of permutations of $[n]=\{1,2,\dots, n\}$. For each integer $k\geq 1$, let $S_{n,k}$  be the set of all permutations of $[n]$ with exactly $k$ disjoint cycles. 
 A subset $H\subseteq S_{n,k}$ is to be a matching if $\pi_1$ and $\pi_2$ do not have any common cycles for all distinct $\pi_1,\pi_2\in H$. The matching number of a family  $\mathcal A\subseteq S_{n,k}$ is denoted by $\nu_{p}(\mathcal A)$ and is defined to be the size of the largest  matching in $\mathcal A$.
In this paper, we determine the maximum size of a family  $\mathcal A\subseteq S_{n,k}$ subject to the condition $\nu_p(\mathcal A)\leq s$.
\end{abstract}

\bigskip\noindent
{\sc keywords:} Matchings, Independent sets, Erd{\H o}s Matching Conjecture, permutations.

\section{Introduction}

Let $[n]=\{1, \ldots, n\}$, and let $\binom{[n]}{k}$ denotes the
family of all $k$-subsets of $[n]$. A family $\mathcal{A}\subseteq \binom{[n]}{k}$ can be considered as a $k$-graph  ($k$-uniform hypergraph) on the vertex set $[n]$.
A subset $\mathcal S$ in $\mathcal A$ is called a \emph{matching} or an \emph {independent set} if $S_1\cap S_2=\varnothing$ for all $S_1,S_2\in\mathcal S$ with $S_1\neq S_2$.
The matching number $\nu(\mathcal A)$ is defined as 
\begin{align*}
\nu(\mathcal A)=\max \left\{ \vert\mathcal S\vert\ :\ \mathcal S \ \ \textnormal{is a matching in}\ \ \mathcal A  \right\}.
\end{align*}
One of the problems in extremal combinatorics is to determine the maximum size of $\mathcal A$ subject to the condition $\nu(\mathcal A)\leq s$. In other words, it is to determine
\begin{align*}
\textnormal{EMC}(n,k,s)=\max \left\{ \vert \mathcal A\vert \ :\ \mathcal A\subseteq \binom{[n]}{k}\ \ \textnormal{and} \ \ \nu(\mathcal A)\leq s  \right\}.
\end{align*}

Erd{\H o}s \cite{Erdos} made the following conjecture and showed that it is true for sufficiently large $n$, i.e., $n\geq n_0(k,s)$.

\begin{con}\label{con_erdos}\textnormal{[Erd{\H o}s Matching Conjecture]}
\begin{align*}
\textnormal{EMC}(n,k,s)=\max\left\{  \binom{n}{k}- \binom{n-s}{k}, \binom{k(s+1)-1}{k}  \right\}.
\end{align*}
\end{con}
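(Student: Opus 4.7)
My plan is to prove both inequalities for all $n,k,s$.

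The lower bound is constructive: exhibit the \emph{star} $\mathcal{A}_\star = \{A \in \binom{[n]}{k} : A \cap [s] \neq \varnothing\}$ of size $\binom{n}{k} - \binom{n-s}{k}$ (any matching uses distinct elements of $[s]$, so $\nu \leq s$), and the \emph{clique} $\mathcal{A}_\triangle = \binom{[k(s+1)-1]}{k}$ of size $\binom{k(s+1)-1}{k}$ ($s+1$ disjoint $k$-sets would need $k(s+1)$ elements, more than $k(s+1)-1$). Both are valid families with $\nu \leq s$, so $\textnormal{EMC}(n,k,s)$ is at least the stated maximum.

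For the upper bound I would first reduce to shifted families: the $(i,j)$-shift $S_{ij}$ with $i<j$ preserves $|\mathcal{A}|$ and does not increase $\nu(\mathcal{A})$, so after iterating I may assume $\mathcal{A}$ is left-shifted. The core task is then a structural dichotomy for shifted families with $\nu \leq s$: either (i) every $A \in \mathcal{A}$ meets $[s]$, giving $|\mathcal{A}| \leq |\mathcal{A}_\star|$; or (ii) $\mathcal{A} \subseteq \binom{[k(s+1)-1]}{k}$, giving $|\mathcal{A}| \leq |\mathcal{A}_\triangle|$. I would pursue this by induction on $s+k$, decomposing $\mathcal{A}$ by the largest coordinate into $\mathcal{A}(\bar n) = \{A \in \mathcal{A} : n \notin A\}$ and the link $\mathcal{A}(n) = \{A \setminus \{n\} : n \in A \in \mathcal{A}\}$. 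Shiftedness implies the link inherits matching number at most $s-1$ (any matching in the link extends by $n$, and shiftedness lets us relocate $n$ freely to witness disjointness), while the residue still satisfies $\nu \leq s$, so the inductive hypothesis applies to both pieces; summing the resulting bounds is designed to recover the target via the Pascal identity on each of the two maxima.

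The main obstacle is the intermediate regime where neither extremal family is obviously tight, for there the inductive bounds produce sub-optimal constants unless one extracts stronger shifted structure. Concretely, if $A^\star$ is a colex-minimal member of $\mathcal{A}$ disjoint from $[s]$, shiftedness forces a large ideal of sibling sets to appear, and the goal is to conclude $A^\star \subseteq [k(s+1)-1]$; making this forcing tight at the boundary is exactly where Conjecture~\ref{con_erdos} is known to resist a uniform proof, with full verification only in special regimes (Erd{\H o}s for $n \geq n_0(k,s)$, Frankl for $k=3$ and for $n \geq (2s+1)k - s$, and subsequent refinements by Frankl--Kupavskii). A complete resolution along this plan would, I expect, require supplementing shifting with a stability statement showing that any shifted near-extremal family differs from $\mathcal{A}_\star$ or $\mathcal{A}_\triangle$ only by a negligible perturbation, allowing one to bridge the two regimes without a separate threshold assumption on $n$.
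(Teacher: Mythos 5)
This statement is not a theorem of the paper at all: it is the Erd\H{o}s Matching Conjecture, which the paper records only as Conjecture~\ref{con_erdos} and explicitly treats as open, citing partial results (Erd\H{o}s for $n\geq n_0(k,s)$, Erd\H{o}s--Gallai for $k=2$, Frankl, Frankl--R\"odl--Ruci\'nski and \L{}uczak--Mieczkowska for $k=3$, Bollob\'as--Daykin--Erd\H{o}s, Huang--Loh--Sudakov, and Frankl--Kupavskii for restricted ranges of $n$). So there is no proof in the paper to compare against, and your proposal cannot be judged a proof of the statement; indeed you concede as much in your final paragraph. What you do establish correctly is only the easy half: the star $\mathcal{A}_\star$ and the clique $\mathcal{A}_\triangle$ are valid families with matching number at most $s$, giving the lower bound $\textnormal{EMC}(n,k,s)\geq\max\bigl\{\binom{n}{k}-\binom{n-s}{k},\binom{k(s+1)-1}{k}\bigr\}$.

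For the upper bound there are concrete gaps beyond the admitted one. First, the ``structural dichotomy'' you aim for --- every shifted family with $\nu\leq s$ either has all members meeting $[s]$ or is contained in $\binom{[k(s+1)-1]}{k}$ --- is false as stated: already for $k=3$, $s=1$ the shifted intersecting family $\{A\in\binom{[n]}{3}:|A\cap[3]|\geq 2\}$ contains $\{1,2,n\}$, so it lies in neither extremal configuration. The true difficulty is precisely to bound the size of such intermediate families, and no dichotomy of the kind you describe can shortcut that. Second, your inductive step relies on the claim that shiftedness forces the link $\mathcal{A}(n)$ to have matching number at most $s-1$; the replacement argument you sketch (relocating $n$ to witness disjointness) needs roughly $n\geq k(s+1)$ spare elements and in any case only yields $\nu(\mathcal{A}(n))\leq s$ in general, so the Pascal-identity bookkeeping does not close. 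Since the conjecture remains open, the honest conclusion is that your proposal reproduces the standard shifting framework and the extremal constructions, but does not (and currently cannot) supply a proof of the stated equality for all $n,k,s$.
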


The  Erd{\H o}s Matching Conjecture is deeply related to certain problems in probability theory dealing with generalizations of Markov’s inequality, as well as some computer science questions. This has been pointed out in \cite{Alon, Alon2}. The Erd{\H o}s Matching Conjecture is trivial for $k=1$ and the case $k=2$ was proved by Erd{\H o}s and Gallai in \cite{EG}. The case $k=3$ was also settled due to the works of Frankl \cite{Frankl3}, Frankl, R\"odl  and Ruci\'nski \cite{FRR}, and \L{}uczak and  Mieczkowska \cite{Luc}. For the general case, Bollob\'as,  Daykin, and Erd{\H o}s \cite{Bol} showed that the conjecture holds for $n\geq 2k^3s$, which is an improvement to Erd{\H o}s's original bound in \cite{Erdos}. Later, Huang, Loh, and Sudakov \cite{Huang} improved the bound further, where they showed that the conjecture holds for $n\geq 3k^2s$. Recent work of Frankl and Kupavskii in \cite{FK2} has reduced the term in the bound from quadratic in $k$ to just linear in $k$. Formally, Frankl and Kupavskii showed that there exists a $s_0$, such that for all $s\geq s_0$ and $n\geq \frac{5}{3}sk-\frac{2}{3}s$, the conjecture is true (see also \cite{Frankl1, Frankl2}). It is worth mentioning that when $s=1$, the Erd{\H o}s Matching Conjecture is the well-known Erd{\H o}s-Ko-Rado theorem in \cite{EKR}. Recently, Ku and Wong \cite{Ku_Wong8} showed an analogue of the Erd{\H o}s Matching Conjecture for weak compositions. As in the case for sets,  their result is a generalization of the analogue of Erd{\H o}s-Ko-Rado theorem for weak compositions \cite{Ku_Wong4} (see also \cite{Ku_Wong5, Ku_Wong6, Ku_Wong7} for other related results). 

 Let $S_{n}$ denotes the set of permutations of $[n]$. For a positive integer $k$, define $S_{n,k}$ to be the set of all permutations of $[n]$ with exactly $k$ disjoint cycles, i.e.,
\[ S_{n,k} = \{\pi \in S_{n}: \pi = c_{1}c_{2} \cdots c_{k}\},\] 
where $c_1,c_2,\dots ,c_k$ are disjoint cycles in $S_n$. It is well known that the size of $S_{n,k}$ is given by $\left [ \begin{matrix}n\\ k \end{matrix}\right]=(-1)^{n-k}s(n,k)$, where $s(n,k)$ is the {\em Stirling number of the first kind}.

We shall use the following notations:
\begin{itemize}
\item[\textnormal{(a)}] $N(c)=\{a_1,a_2,\dots, a_l\}$ for a cycle $c=(a_1,a_2,\dots, a_l)$;
\item[\textnormal{(b)}] $M(\pi)=\{c_1,c_2,\dots, c_k\}$ for a $\pi=c_1c_2\dots c_k\in S_{n,k}$. 
\end{itemize}

A subset $H\subseteq S_{n,k}$ is said to be a matching if $M(\pi_1)\cap M(\pi_2)=\varnothing$ for all $\pi_1,\pi_2\in H$ with $\pi_1\neq \pi_2$. Let $\mathcal A\subseteq S_{n,k}$. The size of the largest matching in $\mathcal A$ is denoted by $\nu_p(\mathcal A)$, i.e.,
\begin{align*}
\nu_p(\mathcal A)= \max \left\{ \vert H\vert \ :\ H \ \ \textnormal{ is a matching in $\mathcal A$}  \right\}.
\end{align*}
In this paper, we will show that (see Theorem \ref{thm_main}) if $\mathcal A\subseteq S_{n,k}$ satisfying $\nu_p(\mathcal A)\leq s$, then
\begin{align*}
\vert \mathcal A\vert \leq  \sum_{1\leq i\leq s} (-1)^{i-1} \binom{s}{i}\left[ \begin{matrix}n-i\\ k-i \end{matrix}\right].
\end{align*}
Now, if we set
\begin{align*}
\textnormal{EMC}_p(n,k,s)=\max \left\{ \vert \mathcal A\vert \ :\ \mathcal A\subseteq S_{n,k}\ \ \textnormal{and} \ \ \nu_p(\mathcal A)\leq s  \right\},
\end{align*}
then (see Corollary \ref{cor_main})
\begin{align}
\textnormal{EMC}_p(n,k,s)=\sum_{1\leq i\leq s} (-1)^{i-1} \binom{s}{i}\left[ \begin{matrix}n-i\\ k-i \end{matrix}\right].\label{eq_emc_1}
\end{align}
In particular, 
\begin{align}
\textnormal{EMC}_p(n,k,1)=\left[ \begin{matrix}n-1\\ k-1 \end{matrix}\right].\label{eq_emc_2}
\end{align}

A family $\mathcal{A} \subseteq S_{n,k}$ is said to be {\em cycle-intersecting} if any two elements of $\mathcal{A}$ have at least a common cycle, i.e., $M(\pi_1)\cap M(\pi_2)\neq\varnothing$ for all $\pi_1,\pi_2\in \mathcal A$. Note that if $\nu_p(\mathcal A)=0$, then $\mathcal A=\varnothing$ and if  $\nu_p(\mathcal A)=1$, then $\mathcal A$ is cycle-intersecting. Therefore, if $\nu_p(\mathcal A)\leq 1$ and $\mathcal A\neq \varnothing$, then $\mathcal A$ must be cycle-intersecting.
Note that equation (\ref{eq_emc_2}) follows from the following theorem which is a special case of \cite[Theorem 2]{Ku_Wong41}.

\begin{thm}\label{thm_erk_permutation} Suppose $k\geq 2$. There exists a function $n_0(k)=O(k^{3})$ such that  if $\mathcal{A} \subseteq S_{n,k}$ is cycle-intersecting and $n \ge n_{0}(k)$, then 
\[ |\mathcal{A}| \le \left [ \begin{matrix}n-1\\ k-1 \end{matrix}\right].\]
\end{thm}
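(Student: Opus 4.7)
The natural extremal family is a \emph{star}: for any cycle $c$ on $\ell$ elements of $[n]$, the family $\mathcal{A}_c=\{\pi\in S_{n,k}:c\in M(\pi)\}$ is cycle-intersecting and bijects with $S_{n-\ell,k-1}$, so $|\mathcal{A}_c|=\left[\begin{matrix}n-\ell\\ k-1\end{matrix}\right]$. The Stirling recurrence $\left[\begin{matrix}m\\ j\end{matrix}\right]=(m-1)\left[\begin{matrix}m-1\\ j\end{matrix}\right]+\left[\begin{matrix}m-1\\ j-1\end{matrix}\right]$ shows this is strictly decreasing in $\ell$, so the largest star arises from a fixed point $c=(i)$ and has size $\left[\begin{matrix}n-1\\ k-1\end{matrix}\right]$. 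The goal is to show no other cycle-intersecting family can beat this.

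The plan is a Hilton-Milner style case split. If all elements of $\mathcal{A}$ share a common cycle $c$, then $\mathcal{A}\subseteq\mathcal{A}_c$ and we are done. Otherwise, fix $\pi_0\in\mathcal{A}$ with $M(\pi_0)=\{c_1,\ldots,c_k\}$, and for each $i$ pick $\sigma_i\in\mathcal{A}$ with $c_i\notin M(\sigma_i)$. Every $\pi\in\mathcal{A}$ must share a cycle with $\pi_0$ and with each $\sigma_i$, so $M(\pi)$ must meet the collection
\[
\mathcal{C}\;=\;\{c_1,\ldots,c_k\}\;\cup\;\bigcup_{i=1}^k\bigl(M(\sigma_i)\setminus\{c_i\}\bigr),
\]
which contains at most $k^2$ cycles. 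This yields a preliminary bound $|\mathcal{A}|\le\sum_{c\in\mathcal{C}}|\mathcal{A}_c|$, refined by the further cycle-intersection constraints.

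The key quantitative input is the asymptotic $\left[\begin{matrix}n-\ell\\ k-1\end{matrix}\right]=O_k\!\bigl(n^{-(\ell-1)}\left[\begin{matrix}n-1\\ k-1\end{matrix}\right]\bigr)$, again from the Stirling recurrence. Thus any $c\in\mathcal{C}$ of length $\ell\ge 2$ contributes $O_k(\left[\begin{matrix}n-1\\ k-1\end{matrix}\right]/n)$ permutations, and summing over at most $k^2$ such cycles produces $O_k(k^2\left[\begin{matrix}n-1\\ k-1\end{matrix}\right]/n)$, which is negligible compared to $\left[\begin{matrix}n-1\\ k-1\end{matrix}\right]$ once $n\gg k^2$.

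The main obstacle I expect is the contribution from the (up to $k^2$) \emph{fixed points} in $\mathcal{C}$: each star $\mathcal{A}_{(i)}$ alone already has size $\left[\begin{matrix}n-1\\ k-1\end{matrix}\right]$, so a crude union bound is hopeless here. The trick is to use the intersection requirement with $\pi_0$ and the $\sigma_j$'s to impose an additional forbidden cycle on such $\pi$: if $(i)\in M(\sigma_j)\setminus\{c_j\}$ and $\pi\in\mathcal{A}_{(i)}\cap\mathcal{A}$, then $\pi$ must \emph{also} share with $\pi_0$ some $c_r\neq (i)$, forcing a second cycle and costing a factor $\left[\begin{matrix}n-|N(c_r)|-1\\ k-2\end{matrix}\right]/\left[\begin{matrix}n-1\\ k-1\end{matrix}\right]=O_k(1/n)$. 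A careful inclusion-exclusion over these extra constraints across the $k^2$ fixed-point stars in $\mathcal{C}$ then produces the strict inequality $|\mathcal{A}|<\left[\begin{matrix}n-1\\ k-1\end{matrix}\right]$; balancing the various error terms (a $k$ from choosing $\pi_0$'s cycles, a $k$ from choosing $\sigma_i$, and a $k$ from the second-cycle loss) is exactly what produces the cubic threshold $n_0(k)=O(k^3)$.
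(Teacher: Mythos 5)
The first thing to note is that the paper does not prove Theorem \ref{thm_erk_permutation} at all: it is imported as a special case of \cite[Theorem 2]{Ku_Wong41}, so there is no in-paper argument to compare yours against. Judged on its own, your star/non-star dichotomy is a sound route to the qualitative statement: in the star case $\mathcal A\subseteq\mathcal A_c$ gives $|\mathcal A|\le\left[\begin{matrix}n-|N(c)|\\ k-1\end{matrix}\right]\le\left[\begin{matrix}n-1\\ k-1\end{matrix}\right]$, and in the non-star case every $\pi\in\mathcal A$ is forced to contain two prescribed disjoint cycles drawn from a collection of size $O(k^2)$, whence $|\mathcal A|=O(k^2)\cdot\left[\begin{matrix}n-2\\ k-2\end{matrix}\right]$, eventually smaller than the star. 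But two steps need repair. The local claim ``if $(i)\in M(\sigma_j)\setminus\{c_j\}$ and $\pi\in\mathcal A_{(i)}\cap\mathcal A$, then $\pi$ must also share with $\pi_0$ some $c_r\neq(i)$'' is false when $(i)$ is itself a cycle of $\pi_0$. The correct (and simpler) statement is: every $\pi\in\mathcal A$ contains some $c_i\in M(\pi_0)$, and since $c_i\notin M(\sigma_i)$ it must also contain some $d\in M(\sigma_i)$ with $d\neq c_i$; hence $|\mathcal A|\le\sum_{i=1}^{k}\sum_{d\in M(\sigma_i)}\left[\begin{matrix}n-|N(c_i)|-|N(d)|\\ k-2\end{matrix}\right]\le k^{2}\left[\begin{matrix}n-2\\ k-2\end{matrix}\right]$, with no inclusion--exclusion over $\mathcal C$ and no separate treatment of long cycles needed. (For $k=2$ the inner bound must be read with the convention $\left[\begin{matrix}0\\ 0\end{matrix}\right]=1$ and handled separately, though that case is trivial.)

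The second, more substantive gap is the threshold. Your accounting ``$k\times k\times(k/n)$'' presumes $\left[\begin{matrix}n-2\\ k-2\end{matrix}\right]\big/\left[\begin{matrix}n-1\\ k-1\end{matrix}\right]=O(k/n)$. That is true asymptotically for fixed $k$ (the ratio is $\sim(k-2)/((n-2)\ln n)$), but nothing in your sketch establishes it, and the elementary estimate one would naturally reach for --- Lemma \ref{lm_stirling_inequality2} of this paper, $\left[\begin{matrix}n-1\\ k-1\end{matrix}\right]>\frac{n-1}{(k-1)^2}\left[\begin{matrix}n-2\\ k-2\end{matrix}\right]$ --- only gives $k^{2}\left[\begin{matrix}n-2\\ k-2\end{matrix}\right]<\left[\begin{matrix}n-1\\ k-1\end{matrix}\right]$ once $n=\Omega(k^{4})$. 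So as written your argument proves the theorem with $n_0(k)=O(k^{4})$; to certify the stated $O(k^{3})$ you need a genuinely sharper, explicit lower bound on $\left[\begin{matrix}n-1\\ k-1\end{matrix}\right]\big/\left[\begin{matrix}n-2\\ k-2\end{matrix}\right]$ of order $n\log n/k$, and that is the one missing quantitative ingredient. Finally, be aware that \cite{Ku_Wong41} proves a stronger $t$-intersecting version, so even after these repairs your argument is a different and more elementary route to the $t=1$ case rather than a reconstruction of the cited proof.
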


Theorem  \ref{thm_erk_permutation} is also well-known as the analogue of Erd{\H o}s-Ko-Rado theorem for permutations with fixed number of cycles (see also \cite{Ellis1,Ellis2, Ku, Ku_Wong61} for other related results).

\section{Main result}

The following recurrence relation of the unsigned Stirling number $\left [ \begin{matrix}n\\ k \end{matrix}\right]$ is well-known:
\begin{equation}\label{eq_recurrence}
\left [ \begin{matrix}n\\ k \end{matrix}\right]=\left [ \begin{matrix}n-1\\ k-1 \end{matrix}\right]+(n-1)\left [ \begin{matrix}n-1\\ k \end{matrix}\right],
\end{equation}
with initial conditions $\left [ \begin{matrix}0\\ 0 \end{matrix}\right] = 1$ and $\left [ \begin{matrix}n\\ 0 \end{matrix}\right] = \left [ \begin{matrix} 0\\ k \end{matrix}\right] = 0$,  $n > 0$. Note that $\left [ \begin{matrix}n\\ n\end{matrix}\right]=1$ and
\begin{align}
\left [ \begin{matrix}n\\ k \end{matrix}\right]\geq \left [ \begin{matrix}n-1\\ k-1 \end{matrix}\right]\qquad \textnormal{and}\qquad \left [ \begin{matrix}n\\ k \end{matrix}\right]\geq \left [ \begin{matrix}n-1\\ k \end{matrix}\right].\label{eq_inequality}
\end{align}
We will use the two inequalities (\ref{eq_inequality})  without mentioning them. We shall use the conventions that $\left [ \begin{matrix}n\\ k \end{matrix}\right]=0$ if $k<0$ or $n<k$. By using equation (\ref{eq_recurrence}), it is not hard to obtain
\begin{align}
\left [ \begin{matrix} n\\ 1 \end{matrix}\right] =(n-1)!\qquad\textnormal{and}\qquad\left [ \begin{matrix} k+1\\ k \end{matrix}\right] =\frac{(k+1)k}{2}.\label{eq_inequality2}
\end{align}

\begin{lm}\label{lm_stirling_inequality} If $n\geq k+1$, then
\begin{align*}
\left [ \begin{matrix} n\\ k+1 \end{matrix}\right] \geq \frac{1}{k^2}\left [ \begin{matrix} n\\ k \end{matrix}\right].
\end{align*}
\end{lm}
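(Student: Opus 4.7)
The plan is to prove the inequality by induction on $n$, establishing it for all admissible values of $k$ simultaneously. The base case is $n = 2$ (forcing $k = 1$), where both sides equal $1$ and the inequality is immediate.

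For the inductive step, suppose the inequality holds for $n$ and every $k$ with $1 \leq k \leq n-1$. I would verify it for $n+1$ and every $k$ with $1 \leq k \leq n$. The boundary value $k = n$ is handled directly via (\ref{eq_inequality2}): the claim reduces to $n^2 \cdot 1 \geq \frac{n(n+1)}{2}$, i.e., $n \geq 1$. For $1 \leq k \leq n-1$, applying the recurrence (\ref{eq_recurrence}) to both sides and rearranging gives
\begin{align*}
k^2 \left[ \begin{matrix}n+1\\ k+1 \end{matrix}\right] - \left[ \begin{matrix}n+1\\ k \end{matrix}\right] = \left(k^2 \left[ \begin{matrix}n\\ k \end{matrix}\right] - \left[ \begin{matrix}n\\ k-1 \end{matrix}\right]\right) + n\left(k^2 \left[ \begin{matrix}n\\ k+1 \end{matrix}\right] - \left[ \begin{matrix}n\\ k \end{matrix}\right]\right).
\end{align*}
The second parenthesised expression is nonnegative by the inductive hypothesis at $(n, k)$. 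For the first: when $k \geq 2$, the inductive hypothesis at $(n, k-1)$ gives $(k-1)^2 \left[ \begin{matrix}n\\ k \end{matrix}\right] \geq \left[ \begin{matrix}n\\ k-1 \end{matrix}\right]$, and $k^2 \geq (k-1)^2$ yields the desired bound; when $k = 1$, the right-hand side is $\left[ \begin{matrix}n\\ 0 \end{matrix}\right] = 0$, so the inequality is trivial.

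I do not anticipate any genuine difficulty in executing this. The only subtlety worth flagging is that the recurrence-based argument degenerates at $k = n$, since it would force invoking the inductive hypothesis at $(n, n)$, which lies outside its range of validity (the constraint $n \geq k + 1$ fails). This is precisely why that boundary case must be handled separately using the explicit evaluation $\left[ \begin{matrix}k+1\\ k \end{matrix}\right] = \frac{(k+1)k}{2}$ from (\ref{eq_inequality2}).
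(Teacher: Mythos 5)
Your proof is correct and takes essentially the same route as the paper: induction on $n$ using the recurrence (\ref{eq_recurrence}), with the boundary case $n=k+1$ (your $k=n$ case at level $n+1$) settled by the explicit value $\left[\begin{smallmatrix}k+1\\ k\end{smallmatrix}\right]=\frac{(k+1)k}{2}$ from (\ref{eq_inequality2}), and the inductive step combining the hypothesis at $(n,k)$ and at $(n,k-1)$ together with $(k-1)^2\leq k^2$. If anything, your write-up is slightly more careful than the paper's, which runs the same estimate but formally divides by $(k-1)^2=0$ when $k=1$; you dispose of that case separately via $\left[\begin{smallmatrix}n\\ 0\end{smallmatrix}\right]=0$.
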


\begin{proof} When $n=k+1$, $\left [ \begin{matrix} n\\ k+1 \end{matrix}\right] =1$ and by equation (\ref{eq_inequality2}), $$\frac{1}{k^2}\left [ \begin{matrix} n\\ k \end{matrix}\right]=\frac{(k+1)k}{2k^2}\leq 1=\left [ \begin{matrix} n\\ k+1 \end{matrix}\right].$$ So, the lemma holds for $n=k+1$. Assume the lemma holds for all $n'$ with $k+1\leq n'<n$. By equation (\ref{eq_recurrence}),
\begin{align*}
\left [ \begin{matrix}n\\ k+1 \end{matrix}\right]&=\left [ \begin{matrix}n-1\\ k \end{matrix}\right]+(n-1)\left [ \begin{matrix}n-1\\ k+1 \end{matrix}\right]\\
&\geq \frac{1}{(k-1)^2}\left [ \begin{matrix}n-1\\ k-1 \end{matrix}\right]+\frac{n-1}{k^2}\left [ \begin{matrix}n-1\\ k \end{matrix}\right]\\
&> \frac{1}{k^2}\left [ \begin{matrix}n-1\\ k-1 \end{matrix}\right]+\frac{n-1}{k^2}\left [ \begin{matrix}n-1\\ k \end{matrix}\right]\\
&= \frac{1}{k^2}\left(\left [ \begin{matrix}n-1\\ k-1 \end{matrix}\right]+(n-1)\left [ \begin{matrix}n-1\\ k \end{matrix}\right]\right)\\
&= \frac{1}{k^2}\left [ \begin{matrix}n\\ k \end{matrix}\right].
\end{align*}
So, it is true for $n$ and by induction, the lemma holds.
\end{proof}

\begin{lm}\label{lm_stirling_inequality2} If $n\geq k+1$ and $k\geq 2$, then
\begin{align*}
\left [ \begin{matrix} n\\ k\end{matrix}\right] > \frac{n}{k^2}\left [ \begin{matrix} n-1\\ k-1 \end{matrix}\right].
\end{align*}
\end{lm}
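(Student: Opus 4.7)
The plan is to chain the basic recurrence (\ref{eq_recurrence}) with Lemma~\ref{lm_stirling_inequality}. First I would invoke the recurrence to write
\begin{align*}
\left [ \begin{matrix} n\\ k\end{matrix}\right] = \left [ \begin{matrix} n-1\\ k-1\end{matrix}\right] + (n-1)\left [ \begin{matrix} n-1\\ k\end{matrix}\right].
\end{align*}
Since $k\geq 2$ and $n-1\geq k = (k-1)+1$, Lemma~\ref{lm_stirling_inequality} applies with $n$ replaced by $n-1$ and $k$ replaced by $k-1$, giving
\begin{align*}
\left [ \begin{matrix} n-1\\ k\end{matrix}\right] \geq \frac{1}{(k-1)^2}\left [ \begin{matrix} n-1\\ k-1\end{matrix}\right].
\end{align*}
Substituting this back into the recurrence would yield
\begin{align*}
\left [ \begin{matrix} n\\ k\end{matrix}\right] \geq \left(1+\frac{n-1}{(k-1)^2}\right)\left [ \begin{matrix} n-1\\ k-1\end{matrix}\right],
\end{align*}
so it would remain only to verify the purely algebraic strict inequality $1+\frac{n-1}{(k-1)^2} > \frac{n}{k^2}$.

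For this last step I would clear denominators and rearrange to the equivalent form $k^{3}(k-2) > -n(2k-1)$; since $k\geq 2$, the left-hand side is nonnegative while the right-hand side is strictly negative, so the inequality is automatic. This is a brief arithmetic check rather than a genuine difficulty. The only subtle point in the whole argument is ensuring the index shift into Lemma~\ref{lm_stirling_inequality} is legitimate, which is precisely where the hypotheses $n\geq k+1$ and $k\geq 2$ are both used in full strength; beyond that I do not anticipate any obstacle, and in particular no fresh induction on $n$ is required, since Lemma~\ref{lm_stirling_inequality} already encodes the inductive content needed here.
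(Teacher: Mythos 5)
Your proposal is correct and follows essentially the same route as the paper: apply the recurrence (\ref{eq_recurrence}), bound $\left[\begin{smallmatrix} n-1\\ k\end{smallmatrix}\right]$ from below via Lemma~\ref{lm_stirling_inequality} with indices shifted to $(n-1,k-1)$, and finish with an elementary comparison of coefficients (the paper passes through $\frac{n}{(k-1)^2} > \frac{n}{k^2}$ where you clear denominators directly, but this is the same final step in different clothing).
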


\begin{proof} By equation (\ref{eq_recurrence}) and Lemma \ref{lm_stirling_inequality},
\begin{align*}
\left [ \begin{matrix}n\\ k \end{matrix}\right]&=\left [ \begin{matrix}n-1\\ k-1 \end{matrix}\right]+(n-1)\left [ \begin{matrix}n-1\\ k \end{matrix}\right]\\
&\geq \left [ \begin{matrix}n-1\\ k-1 \end{matrix}\right]+\frac{n-1}{(k-1)^2}\left [ \begin{matrix}n-1\\ k-1 \end{matrix}\right]\\
&= \left [ \begin{matrix}n-1\\ k-1 \end{matrix}\right]\left(\frac{n}{(k-1)^2}+1-\frac{1}{(k-1)^2}\right)\\
&\geq  \frac{n}{(k-1)^2}\left [ \begin{matrix}n-1\\ k-1 \end{matrix}\right].
\end{align*}
The lemma follows by noting that $(k-1)^2<k^2$.
\end{proof}

\begin{lm}\label{lm_main} Let $k\geq 3$, $l\geq 1$  and $n\geq l^2+1$.   Let $b$ be a cycle in $S_n$ and
\begin{align*}
\mathcal A\subseteq \left\{\pi \in S_{n,k}\ :\ b\in M(\pi)   \right\}.
\end{align*}
Let $c_1,c_2,\dots, c_l$ be disjoint cycles in $S_n$ and $c_i\neq b$ for all $i$. 
If $\vert \mathcal A\vert\geq \sqrt{n}\left [ \begin{matrix}n-2\\ k-2 \end{matrix}\right]$, then  
there exists a $\pi_0\in \mathcal A$ such that $c_i\notin M(\pi_0)$ for all $i$.
\end{lm}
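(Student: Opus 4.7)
The plan is to prove the contrapositive: assuming every $\pi \in \mathcal{A}$ contains some $c_i$ as one of its cycles, I will show that $|\mathcal{A}| < \sqrt{n}\left[\begin{matrix}n-2\\ k-2\end{matrix}\right]$, which contradicts the hypothesis and therefore forces the existence of the desired $\pi_0$.

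First I would write $\mathcal{A} = \bigcup_{i=1}^{l} \mathcal{A}_i$, where $\mathcal{A}_i := \{\pi \in \mathcal{A} : c_i \in M(\pi)\}$, and bound each $|\mathcal{A}_i|$ separately. The key observation is that every $\pi \in \mathcal{A}_i$ has both $b$ and $c_i$ among its cycles, and cycles in a permutation's cycle decomposition are pairwise disjoint. Hence if $N(b) \cap N(c_i) \neq \varnothing$, then (since $c_i \neq b$) no $\pi$ can simultaneously contain $b$ and $c_i$ as cycles, so $\mathcal{A}_i = \varnothing$. Otherwise $N(b)$ and $N(c_i)$ are disjoint, and any $\pi \in \mathcal{A}_i$ is determined by the choice of $k-2$ cycles on the remaining $n - |N(b)| - |N(c_i)|$ elements, giving at most $\left[\begin{matrix}n - |N(b)| - |N(c_i)| \\ k-2\end{matrix}\right]$ permutations.

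Since cycles contain at least one element, $|N(b)|, |N(c_i)| \geq 1$, so $n - |N(b)| - |N(c_i)| \leq n-2$; here is where $k \geq 3$ is used, to ensure $k-2 \geq 1$ and keep the Stirling numbers in their standard monotone range. Applying the monotonicity inequality (\ref{eq_inequality}) (iterated in the top index) then gives $|\mathcal{A}_i| \leq \left[\begin{matrix}n-2\\ k-2\end{matrix}\right]$ in either case.

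Finally, the union bound yields $|\mathcal{A}| \leq l \left[\begin{matrix}n-2\\ k-2\end{matrix}\right]$. The hypothesis $n \geq l^2 + 1$ gives $l < \sqrt{n}$, hence $|\mathcal{A}| < \sqrt{n}\left[\begin{matrix}n-2\\ k-2\end{matrix}\right]$, contradicting the lower bound assumed in the lemma. There is no real technical obstacle: the argument is a straightforward union bound, and the numerical hypotheses $k \geq 3$ and $n \geq l^2+1$ are used precisely to make the per-$i$ Stirling estimate valid and to make $l$ strictly smaller than $\sqrt{n}$.
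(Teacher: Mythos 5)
Your proposal is correct and follows essentially the same route as the paper: decompose by which $c_i$ appears, bound each piece by $\left[\begin{matrix}n-2\\ k-2\end{matrix}\right]$ using the fact that both $b$ and $c_i$ are forced cycles, and apply the union bound together with $l<\sqrt{n}$. The only cosmetic difference is that you argue by contrapositive while the paper directly exhibits $\pi_0\in\mathcal A\setminus\bigcup_i\mathcal A(c_i)$; your extra remark handling the case $N(b)\cap N(c_i)\neq\varnothing$ is a small point of care the paper leaves implicit.
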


\begin{proof} For each $1\leq i\leq l$, set
\begin{align*}
\mathcal A(c_i)=\left\{ \pi\in\mathcal A\ :\ c_i\in M(\pi) \right\}.
\end{align*}
Then
\begin{align*}
\mathcal A(c_i)\subseteq \left\{\pi\in S_{n,k}\ :\ c_i,b\in M(\pi)  \right\}.
\end{align*}
Therefore,
\begin{align*}
\vert \mathcal A(c_i)\vert \leq \left [ \begin{matrix}n-\vert N(b)\vert-\vert N(c_i)\vert\\ k-2 \end{matrix}\right]\leq \left [ \begin{matrix}n-2\\ k-2 \end{matrix}\right].
\end{align*}
So,
\begin{align*}
\left\vert\bigcup_{1\leq i\leq l} \mathcal A(c_i)  \right\vert&\leq \sum_{1\leq i\leq l} \vert \mathcal A(c_i)\vert\leq \sum_{1\leq i\leq l} \left [ \begin{matrix}n-2\\ k-2 \end{matrix}\right]= l \left [ \begin{matrix}n-2\\ k-2 \end{matrix}\right].
\end{align*}

Since $n\geq  l^2+1$, we have
\begin{align*}
\left\vert\bigcup_{1\leq i\leq l} \mathcal A(c_i)  \right\vert\leq l\left [ \begin{matrix}n-2\\ k-2 \end{matrix}\right]< \sqrt{n}\left [ \begin{matrix}n-2\\ k-2 \end{matrix}\right]\leq \vert \mathcal A\vert.
\end{align*}
Thus, there is a $\pi_0\in \mathcal A\setminus \left( \bigcup_{1\leq i\leq l} \mathcal A(c_i)    \right)$. If $c_{j_0}\in M(\pi_0)$ for some $j_0$, then $\pi_0\in  \mathcal A(c_{j_0})$, a contradiction. Hence, $c_i\notin M(\pi_0)$ for all $i$. 

 This completes the proof of the lemma.
\end{proof}

\begin{lm}\label{lm_main2} Let $k\geq 3$, $r\geq 1$ and $n\geq (rk)^2+1$. Let $H\subseteq S_{n,k}$ be a matching of size $r$. Let  $b$ be a cycle in $S_n$ and
\begin{align*}
\mathcal A\subseteq \left\{\pi \in S_{n,k}\ :\ b\in M(\pi)   \right\}.
\end{align*}
 Suppose that  $b\notin M(\pi)$ for all $\pi \in H$.
If $\vert \mathcal A\vert\geq  \sqrt{n}\left [ \begin{matrix}n-2\\ k-2 \end{matrix}\right]$, then  
there exists a $\pi_0\in \mathcal A$ such that $\{\pi_0\}\cup H$ is a matching.
\end{lm}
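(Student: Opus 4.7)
The plan is to mimic the union-bound argument of Lemma~\ref{lm_main}, applied to the collection of all $rk$ cycles appearing in the permutations of $H$. Set $C = \bigcup_{\pi \in H} M(\pi)$. Because $H$ is a matching of size $r$ in $S_{n,k}$, the cycle sets $M(\pi)$ for distinct $\pi \in H$ are disjoint and each has $k$ elements, so $|C| = rk$; by hypothesis $b \notin C$. Finding the required $\pi_0$ reduces to finding $\pi_0 \in \mathcal{A}$ with $M(\pi_0) \cap C = \varnothing$, since the cycle $b$ then already separates $\pi_0$ from every $\pi \in H$ (in particular $\pi_0 \notin H$).

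For each $c \in C$ let $\mathcal{A}(c) = \{\pi \in \mathcal{A} : c \in M(\pi)\}$. The inclusion $\mathcal{A}(c) \subseteq \{\pi \in S_{n,k} : b, c \in M(\pi)\}$ gives, exactly as in Lemma~\ref{lm_main},
\[
|\mathcal{A}(c)| \leq \left[ \begin{matrix} n - |N(b)| - |N(c)| \\ k-2 \end{matrix} \right] \leq \left[ \begin{matrix} n-2 \\ k-2 \end{matrix} \right],
\]
with the left-hand side vanishing whenever $b$ and $c$ share an element, so the bound is safe in that case. Summing over $c \in C$ and using $n \geq (rk)^2 + 1$, which forces $rk < \sqrt{n}$, yields
\[
\left| \bigcup_{c \in C} \mathcal{A}(c) \right| \leq rk \left[ \begin{matrix} n-2 \\ k-2 \end{matrix} \right] < \sqrt{n} \left[ \begin{matrix} n-2 \\ k-2 \end{matrix} \right] \leq |\mathcal{A}|.
\]
Hence some $\pi_0 \in \mathcal{A}$ lies outside $\bigcup_{c \in C} \mathcal{A}(c)$, i.e.\ $M(\pi_0) \cap C = \varnothing$, and $\{\pi_0\} \cup H$ is a matching.

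The argument is essentially a rerun of Lemma~\ref{lm_main}; the only subtlety is that two cycles in $C$ coming from different permutations of $H$ may share elements, so they are not necessarily pairwise disjoint and Lemma~\ref{lm_main} cannot be invoked verbatim. This is harmless, because the disjointness of the $c_i$ plays no role in its proof: for the union bound we only need each $c \in C$ to differ from $b$ (giving the $\left[ \begin{matrix} n-2 \\ k-2 \end{matrix} \right]$ upper bound on $|\mathcal{A}(c)|$) together with the count $|C| = rk < \sqrt{n}$.
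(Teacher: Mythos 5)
Your proof is correct and is essentially the paper's own argument: the paper collects the $rk$ cycles of $\bigcup_{\pi\in H}M(\pi)$ and invokes Lemma~\ref{lm_main} with $l=rk$, while you rerun the same union bound inline. Your closing remark is a genuine (minor) improvement in rigor, since cycles coming from different members of $H$ need not be disjoint as the hypotheses of Lemma~\ref{lm_main} literally require, and you correctly observe that disjointness plays no role in the union-bound argument.
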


\begin{proof}  Let's consider the set
\begin{align*}
\bigcup_{\pi\in H} M(\pi).
\end{align*}
Note that $\vert M(\pi)\vert=k$ for all $\pi\in H$. Since $H$ is a matching, $M(\pi)\cap M(\pi')=\varnothing$ for all $\pi,\pi'\in H$ with $\pi\neq \pi'$. Thus,
$\left\vert \bigcup_{\pi\in H} M(\pi)\right\vert=rk$. Let $c_1,c_2,\dots, c_{rk}$ be all the elements in $\bigcup_{\pi\in H} M(\pi)$. Note that each $c_i$ is a cycle in $S_n$. By Lemma \ref{lm_main}, there exists a $\pi_0\in \mathcal A$ such that $c_i\notin M(\pi_0)$ for all $i$.

Suppose $\{\pi_0\}\cup H$ is not a matching. Then $M(\pi_0)\cap M(\pi_1)\neq\varnothing$ for some $\pi_1\in H$. This implies that $c_{j_1}\in M(\pi_0)\cap M(\pi_1)$ for some $1\leq j_1\leq rk$. But this is impossible as $c_i\notin M(\pi_0)$ for all $i$.  Hence,  $\{\pi_0\}\cup H$  is a matching.
\end{proof}

\begin{lm}\label{lm_main3} Let $k\geq 3$, $l\geq 1$, $r\geq 1$  and $n\geq ((r+l-1)k)^2+1$. Let $H\subseteq S_{n,k}$ be a matching of size $r$. Let 
\begin{align*}
\mathcal A_1&\subseteq \left\{\pi\in S_{n,k}\ :\ b_1\in M(\pi)   \right\};\\
\mathcal A_2&\subseteq \left\{\pi\in S_{n,k}\ :\ b_2\in M(\pi)   \right\};\\
&\vdots\\
\mathcal A_l&\subseteq \left\{\pi\in S_{n,k}\ :\ b_l\in M(\pi)    \right\},
\end{align*}
where each $b_i$ is a cycle in $S_n$, $b_i\neq b_{i'}$ for all $i\neq i'$ and $b_i\notin M(\pi)$ for all $\pi\in H$ and all $i$.
If $\vert \mathcal A_i\vert\geq \sqrt{n}\left [ \begin{matrix}n-2\\ k-2 \end{matrix}\right]$ for all $1\leq i\leq l$, then  
there exists a  $\pi_i\in \mathcal A_i$ for each $i$, such that $\{\pi_1,\pi_2,\dots, \pi_l\}\cup H$ is a matching.
\end{lm}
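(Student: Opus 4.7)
The plan is to construct $\pi_1,\pi_2,\ldots,\pi_l$ one at a time, applying Lemma \ref{lm_main} at each step. For $i=1,\ldots,l$ I would pick $\pi_i$ so as to maintain two invariants: (a) $\{\pi_1,\ldots,\pi_i\}\cup H$ is a matching, and (b) $b_j\notin M(\pi_i)$ for every $j>i$. Invariant (b) is a forward-looking safeguard: since each later $\pi_j$ must contain $b_j$, forbidding $b_j$ from $M(\pi_i)$ now prevents a future cycle-clash between $\pi_i$ and $\pi_j$.

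At step $i$ I would apply Lemma \ref{lm_main} to $\mathcal{A}_i$ with forced cycle $b_i$, using as the avoidance list the union of (i) the $(r+i-1)k$ cycles appearing in the current matching $H\cup\{\pi_1,\ldots,\pi_{i-1}\}$, pairwise distinct by the matching property, and (ii) the cycles $b_{i+1},\ldots,b_l$. The routine bookkeeping is to verify that this list consists of pairwise distinct cycles, all different from $b_i$; this uses the hypothesis $b_i\notin M(\pi)$ for $\pi\in H$, the hypothesis $b_i\neq b_{i'}$ for $i\neq i'$, and invariant (b) applied at earlier steps (which gives $b_j\notin M(\pi_p)$ whenever $j>i>p$). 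The list has size $(r+i-1)k+(l-i)$, maximized at $i=l$ with value $(r+l-1)k$ (increasing in $i$ since $k\geq 3$), so the hypothesis $n\geq ((r+l-1)k)^2+1$ supplies the quadratic bound required by Lemma \ref{lm_main}. Combined with $|\mathcal{A}_i|\geq \sqrt{n}\left[\begin{matrix}n-2\\ k-2\end{matrix}\right]$, Lemma \ref{lm_main} yields the desired $\pi_i\in\mathcal{A}_i$ avoiding the whole list. Then invariant (b) holds by construction, and invariant (a) holds because any shared cycle between $\pi_i$ and some $\pi\in H\cup\{\pi_1,\ldots,\pi_{i-1}\}$ would belong to the avoidance list, contradicting the choice of $\pi_i$. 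After $l$ steps we obtain the matching $\{\pi_1,\ldots,\pi_l\}\cup H$.

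The main obstacle is realizing that a naive induction that merely applies Lemma \ref{lm_main2} at each step does not work: Lemma \ref{lm_main2} only preserves the matching condition with previously chosen elements, but it does not prevent $\pi_i$ from containing some $b_j$ with $j>i$, which would later make it impossible to extend the matching with $\pi_j$ (since $b_j\in M(\pi_j)$ is forced). The remedy is to use Lemma \ref{lm_main} directly and bundle $b_{i+1},\ldots,b_l$ into the avoidance list, enlarging the required quadratic bound on $n$ only by at most an additive $l-i$ per step, which is comfortably absorbed into the stated $((r+l-1)k)^2+1$ threshold.
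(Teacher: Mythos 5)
Your proposal is correct and is essentially the paper's own argument: the paper phrases it as an induction on $l$ (choosing $\pi_l$ first via Lemma \ref{lm_main} applied to the $rk$ cycles of $H$ together with $b_1,\dots,b_{l-1}$, then recursing), which is just the recursive form of your iterative construction, and the key idea of adding the remaining forced cycles $b_j$ to the avoidance list is the same in both. Your count $(r+i-1)k+(l-i)\leq (r+l-1)k$ matches the paper's verification that the hypothesis $n\geq ((r+l-1)k)^2+1$ suffices at every stage.
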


\begin{proof}  We shall prove by induction on $l$. The case $l=1$ follows from Lemma \ref{lm_main2}. Suppose $l\geq 2$. Assume that the lemma holds for all $1\leq l'< l$. 

Let $c_1,c_2,\dots, c_{rk}$ be all the cycles in $\bigcup_{\pi\in H} M(\pi)$. Note that $c_j\neq b_i$ for all $1\leq i\leq l$ and $1\leq j\leq rk$. Now, let $c_{rk+i}=b_i$ for all $1\leq i\leq l-1$. Then $b_l\neq c_j$ for all $1\leq j\leq rk+l-1$. By Lemma \ref{lm_main}, there exists a $\pi_l\in \mathcal A_l$ such that $c_j\notin M(\pi_l)$ for all $1\leq j\leq rk+l-1$, provided that $n\geq (rk+l-1)^2+1$, which is true as $n\geq ((r+l-1)k)^2+1$. 

Now, $H'=\{\pi_l\}\cup H$ is a matching of size $r+1$. Clearly, $b_i\notin M(\pi)$ for all $\pi\in H'$ and all $1\leq i\leq l-1$. By induction, there exists a  $\pi_i\in \mathcal A_i$ for $1\leq i\leq l-1$ such that $\{\pi_1,\dots, \pi_{l-1}\}\cup H'$ is a matching, provided that $n\geq (((r+1)+l-2)k)^2+1=((r+l-1)k)^2+1$.
Hence, $\{\pi_1,\pi_2,\dots, \pi_l\}\cup H$ is a matching.
\end{proof}

\begin{lm}\label{lm_main_eq3} Let $k,s$ be positive integers with $k\geq 3$. If $n\geq  sk^2$, then
\begin{align*}
\sum_{1\leq i\leq s} (-1)^{i-1} \binom{s}{i}\left[ \begin{matrix}n-i\\ k-i \end{matrix}\right]\geq s\left[ \begin{matrix}n-1\\ k-1 \end{matrix}\right]-\frac{s(s-1)}{2}\left[ \begin{matrix}n-2\\ k-2 \end{matrix}\right]
\end{align*}
\end{lm}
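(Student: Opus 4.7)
The plan is to recognize the left-hand side as the cardinality of a union obtained by inclusion--exclusion, and then apply the Bonferroni inequality truncated at level $2$.

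First I would fix $s$ distinct points of $[n]$, say $1,2,\dots,s$ (possible because $n\ge sk^{2}>s$), and for each $i\in[s]$ set
\[
X_{i}=\{\pi\in S_{n,k}\ :\ (i)\in M(\pi)\},
\]
the collection of permutations in $S_{n,k}$ whose cycle structure contains the singleton cycle $(i)$. For any $J\subseteq[s]$ of size $j$, a permutation in $\bigcap_{i\in J}X_{i}$ is exactly one whose restriction to $[n]\setminus J$ is a permutation of a set of size $n-j$ into $k-j$ disjoint cycles, and so $\bigl|\bigcap_{i\in J}X_{i}\bigr|=\left[\begin{matrix}n-j\\ k-j\end{matrix}\right]$. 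Inclusion--exclusion therefore gives
\[
\Bigl|\bigcup_{i=1}^{s}X_{i}\Bigr|=\sum_{j=1}^{s}(-1)^{j-1}\binom{s}{j}\left[\begin{matrix}n-j\\ k-j\end{matrix}\right],
\]
which is exactly the left-hand side of the asserted inequality.

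Next I would apply the standard Bonferroni lower bound
\[
\Bigl|\bigcup_{i=1}^{s}X_{i}\Bigr|\ \ge\ \sum_{i=1}^{s}|X_{i}|-\sum_{1\le i<i'\le s}|X_{i}\cap X_{i'}|,
\]
and substitute $|X_{i}|=\left[\begin{matrix}n-1\\ k-1\end{matrix}\right]$ and $|X_{i}\cap X_{i'}|=\left[\begin{matrix}n-2\\ k-2\end{matrix}\right]$. The right-hand side becomes $s\left[\begin{matrix}n-1\\ k-1\end{matrix}\right]-\binom{s}{2}\left[\begin{matrix}n-2\\ k-2\end{matrix}\right]$, matching the required lower bound.

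There is no serious obstacle: the Bonferroni step is the elementary element-counting identity that any element lying in exactly $t$ of the $X_{i}$'s contributes $1$ to the union but $t-\binom{t}{2}\le 1$ to the truncated sum. In fact only $n\ge s$ is genuinely used, so the much stronger hypothesis $n\ge sk^{2}$ is harmless. If one preferred a purely algebraic proof, an alternative route would be to group the alternating sum into consecutive pairs of indices $(i,i+1)$ with $i\ge 3$ odd and invoke Lemma~\ref{lm_stirling_inequality2} to show each pair is non-negative; that approach genuinely uses the hypothesis $n\ge sk^{2}$ via the reduction to the numerical inequality $(n-i)(i+1)\ge (s-i)(k-i)^{2}$.
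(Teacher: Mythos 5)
Your proof is correct, and it takes a genuinely different route from the paper's. The paper argues purely algebraically: setting $a_i=\binom{s}{i}\left[\begin{matrix}n-i\\ k-i\end{matrix}\right]$, it uses Lemma \ref{lm_stirling_inequality2} to bound the ratio $a_{i+1}/a_i$ below $1$ (this is exactly where the hypothesis $n\geq sk^2$ is consumed), and then invokes the alternating-series bound $\sum_{1\leq i\leq s}(-1)^{i-1}a_i\geq a_1-a_2$ valid for decreasing terms. You instead realize the left-hand side combinatorially as $\bigl|\bigcup_{i=1}^{s}X_i\bigr|$ for the families $X_i$ of permutations containing the singleton cycle $(i)$ --- the same inclusion--exclusion computation the paper itself carries out inside Lemma \ref{lm_main_eq2} in the case of singleton cycles --- and then apply the degree-two Bonferroni inequality, whose element-wise justification (an element in exactly $t$ of the sets contributes $t-\binom{t}{2}\leq 1$ to the truncated sum) you correctly supply. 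Both the identification $\bigl|\bigcap_{i\in J}X_i\bigr|=\left[\begin{matrix}n-|J|\\ k-|J|\end{matrix}\right]$ (consistent with the paper's vanishing conventions when $|J|>k$ or $n<k$) and the Bonferroni step are sound. What your approach buys is economy and a weaker hypothesis: you only need $n\geq s$, which is amply guaranteed here, whereas the paper's ratio estimate genuinely needs $n$ on the order of $sk^2$. Since the lemma is only ever applied under much stronger conditions on $n$, this gains nothing downstream, but your argument is cleaner and dovetails naturally with the combinatorial interpretation already present in Lemma \ref{lm_main_eq2}.
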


\begin{proof} Clearly, the lemma holds if $s=1$. So, we may assume that $s\geq 2$. Let $a_i=\displaystyle\binom{s}{i}\left[ \begin{matrix}n-i\\ k-i \end{matrix}\right]$. 
If $a_{i+1}=0$ and $a_i\neq 0$, then $a_{i+1}=0<1\leq a_i$. 
If $a_{i+1}=0$ and $a_i= 0$, then $a_{i+1}=0=a_i$.  Note that if $a_{i+1}=0$, then $k\leq i+1$. Suppose $k>i+1$. Then $a_{i+1}\neq 0$ and $a_i\neq 0$. By Lemma \ref{lm_stirling_inequality2},  
\begin{align*}
\left [ \begin{matrix} n-i\\ k-i\end{matrix}\right] > \frac{n-i}{(k-i)^2}\left [ \begin{matrix} n-i-1\\ k-i-1 \end{matrix}\right].
\end{align*}
Therefore,
\begin{align*}
\frac{a_{i+1}}{a_i} &=\frac{\displaystyle\binom{s}{i+1}\left[ \begin{matrix}n-i-1\\ k-i-1 \end{matrix}\right]}{\displaystyle\binom{s}{i}\left[ \begin{matrix}n-i\\ k-i \end{matrix}\right]}\leq \left(\frac{s-i}{i+1}\right)\left(   \frac{(k-i)^2} {n-i}\right)\\
&\leq \left(\frac{s-1}{2}\right)\left(   \frac{(k-1)^2} {n-s+1}\right)<1,
\end{align*}
provided that $n\geq sk^2>\frac{(s-1)(k-1)^2}{2}+s-1$.

Now, $\sum_{1\leq i\leq s} (-1)^{i-1} a_i$ is an alternating series with $a_{i+1}\leq a_i$. Hence, $\sum_{1\leq i\leq s} (-1)^{i-1} a_i\geq a_1-a_2$, and the lemma follows.
\end{proof}

\begin{lm}\label{lm_main_eq2} Let $b_1,b_2,\dots, b_s$ be distinct cycles in $S_n$. For each $1\leq t\leq s$, let
\begin{align*}
\mathcal A_t=\left\{ \pi\in S_{n,k}\ :\ b_t\in M(\pi)  \right\}. 
\end{align*}
Suppose $k\geq 3$ and $n\geq  (sk)^2$. If 
\begin{align*}
\mathcal A=\bigcup_{1\leq t\leq s} \mathcal A_t,
\end{align*}
then
\begin{align*}
\vert \mathcal A\vert\leq \sum_{1\leq i\leq s} (-1)^{i-1} \binom{s}{i}\left[ \begin{matrix}n-i\\ k-i \end{matrix}\right].
\end{align*}
Furthermore,  equality holds if and only if $\vert N(b_t)\vert=1$ for all $t$, i.e., each $b_t$ is a cycle of length 1.
\end{lm}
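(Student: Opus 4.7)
The plan is to compute $|\mathcal A|$ via inclusion--exclusion, recognize the right-hand side as the corresponding union-size when each $b_t$ is replaced by a fixed-point cycle, and then prove the resulting monotonicity inequality by a cycle-swap induction.

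By inclusion--exclusion,
\begin{align*}
|\mathcal A| = \sum_{\varnothing \neq I \subseteq [s]}(-1)^{|I|-1}\Bigl|\bigcap_{t \in I}\mathcal A_t\Bigr|,
\end{align*}
and for each nonempty $I$, $\bigcap_{t \in I}\mathcal A_t$ is empty unless the cycles $\{b_t : t \in I\}$ are pairwise disjoint (i.e.\ $N(b_t) \cap N(b_{t'}) = \varnothing$ for $t \neq t'$ in $I$), in which case the bijection $\pi \mapsto \pi|_{[n] \setminus \bigcup_{t \in I}N(b_t)}$ gives it size $\left[\begin{matrix}n - \sum_{t \in I}|N(b_t)| \\ k - |I|\end{matrix}\right]$. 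On the other hand, for any choice of $s$ distinct elements $y_1,\ldots,y_s \in [n]$ and $\mathcal A_t^{\star} = \{\pi \in S_{n,k} : (y_t) \in M(\pi)\}$, the fixed-point cycles $(y_1),\ldots,(y_s)$ are pairwise disjoint, so $|\bigcap_{t \in I}\mathcal A_t^{\star}| = \left[\begin{matrix}n - |I| \\ k - |I|\end{matrix}\right]$ for every nonempty $I$, and the same identity yields $|\bigcup_t \mathcal A_t^{\star}| = \sum_{i=1}^s (-1)^{i-1}\binom{s}{i}\left[\begin{matrix}n-i \\ k-i\end{matrix}\right]$, exactly the right-hand side of the lemma. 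Thus the claim is equivalent to $|\bigcup_t \mathcal A_t| \le |\bigcup_t \mathcal A_t^{\star}|$: replacing each $b_t$ by a fresh fixed-point cycle never shrinks the union.

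I would prove this monotonicity by induction on $L = \sum_{t=1}^s (|N(b_t)| - 1) \ge 0$. The base case $L = 0$ gives equality. For the inductive step, pick some $t_0$ with $|N(b_{t_0})| \ge 2$. The set $\{y \in [n] : (y) = b_t \text{ for some } t \ne t_0\}$ has size at most $s - 1 < n$, so we can choose $y \in [n]$ with $(y) \notin \{b_t : t \ne t_0\}$; then $b_{t_0}' = (y)$ is distinct from every other $b_t$, and the new collection $\{b_{t_0}', b_t : t \ne t_0\}$ has strictly smaller total excess length. By the induction hypothesis, the new union $\mathcal A'$ satisfies $|\mathcal A'| \le \mathrm{RHS}$, so it suffices to prove the swap-monotonicity $|\mathcal A| \le |\mathcal A'|$.

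The main obstacle is this single-swap monotonicity. Letting $\mathcal X = \bigcup_{t \ne t_0}\mathcal A_t$ (unchanged by the swap),
\begin{align*}
|\mathcal A'| - |\mathcal A| = \bigl(|\mathcal A_{(y)}^{\star}| - |\mathcal A_{b_{t_0}}|\bigr) - \bigl(|\mathcal A_{(y)}^{\star} \cap \mathcal X| - |\mathcal A_{b_{t_0}} \cap \mathcal X|\bigr).
\end{align*}
The first bracket equals $\left[\begin{matrix}n-1 \\ k-1\end{matrix}\right] - \left[\begin{matrix}n - |N(b_{t_0})| \\ k-1\end{matrix}\right]$, and the recurrence (\ref{eq_recurrence}) with $|N(b_{t_0})| \ge 2$ bounds it below by $\left[\begin{matrix}n-2 \\ k-2\end{matrix}\right] + (n-3)\left[\begin{matrix}n-2 \\ k-1\end{matrix}\right]$. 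For the second bracket, since $(y) \ne b_t$ for every $t \ne t_0$, each intersection $\mathcal A_{(y)}^{\star} \cap \mathcal A_t$ is either empty (when $y \in N(b_t)$) or has size $\left[\begin{matrix}n - 1 - |N(b_t)| \\ k - 2\end{matrix}\right] \le \left[\begin{matrix}n-2 \\ k-2\end{matrix}\right]$; combined with $|\mathcal A_{b_{t_0}} \cap \mathcal X| \ge 0$, a union bound gives $|\mathcal A_{(y)}^{\star} \cap \mathcal X| - |\mathcal A_{b_{t_0}} \cap \mathcal X| \le (s-1)\left[\begin{matrix}n-2 \\ k-2\end{matrix}\right]$. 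Hence $|\mathcal A'| - |\mathcal A| \ge (n-3)\left[\begin{matrix}n-2 \\ k-1\end{matrix}\right] - (s-2)\left[\begin{matrix}n-2 \\ k-2\end{matrix}\right]$, which is non-negative by Lemma \ref{lm_stirling_inequality} (applied with $n \to n-2$, $k \to k-2$) together with $n \ge (sk)^2$. This bound is strict whenever $|N(b_{t_0})| \ge 2$, so propagating strictness through the induction yields the equality case: equality holds if and only if $|N(b_t)| = 1$ for every $t$.
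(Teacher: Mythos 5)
Your proof is correct, but it takes a genuinely different route from the paper's on the half of the statement that requires work. Both arguments open the same way: when every $b_t$ is a fixed point the cycles are pairwise disjoint, and inclusion--exclusion evaluates $\vert\mathcal A\vert$ exactly as the stated alternating sum. For the case where some $\vert N(b_t)\vert\ge 2$, however, the paper abandons inclusion--exclusion entirely: it applies the crude union bound $\vert\mathcal A\vert\le\sum_t\vert\mathcal A_t\vert\le\left[\begin{matrix}n-2\\ k-1\end{matrix}\right]+(s-1)\left[\begin{matrix}n-1\\ k-1\end{matrix}\right]$ and shows this is strictly below the target by combining Lemma \ref{lm_main_eq3} (the alternating sum is at least its first two terms) with Lemma \ref{lm_stirling_inequality2}. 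You instead prove a compression-type monotonicity statement --- replacing one long cycle by a fresh fixed point (chosen distinct from the other $b_t$, which is what keeps the induction hypothesis applicable to $s$ distinct cycles) strictly increases the union --- by induction on the total excess length, controlling a single swap via the recurrence (\ref{eq_recurrence}) and Lemma \ref{lm_stirling_inequality}. Your local estimate checks out: the gain is at least $\left[\begin{matrix}n-2\\ k-2\end{matrix}\right]+(n-3)\left[\begin{matrix}n-2\\ k-1\end{matrix}\right]$, the loss at most $(s-1)\left[\begin{matrix}n-2\\ k-2\end{matrix}\right]$, and $n\ge(sk)^2$ makes the difference strictly positive, so strictness propagates and the equality characterization follows. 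What your approach buys is independence from Lemma \ref{lm_main_eq3} and a slightly stronger structural fact (union size is monotone under shortening any one cycle to a fixed point); what the paper's buys is brevity, since Lemma \ref{lm_main_eq3} is needed again in the proof of Theorem \ref{thm_main} in any case.
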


\begin{proof} Suppose each $b_t$ is a cycle of length 1.  Then  all the $b_i$'s are disjoint cycles. By the Principle of Inclusion and Exclusion, 
\begin{align*}
\vert \mathcal A\vert &=\left\vert \bigcup_{1\leq t\leq s} \mathcal A_t\right\vert=\sum_{1\leq i\leq s} (-1)^{i-1} \sum_{\substack{V\subseteq [s],\\ \vert V\vert=i}} \left\vert \bigcap_{v\in V} \mathcal A_v \right\vert.
\end{align*}
Now,
\begin{align*}
 \bigcap_{v\in V} \mathcal A_v  =\left\{  \pi\in S_{n,k}\ :\ b_v\in M(\pi)\ \ \textnormal{for all $v\in V$}  \right\}.
\end{align*}
Therefore,
\begin{equation}
\left\vert \bigcap_{v\in V} \mathcal A_v \right\vert =
\left[ \begin{matrix}n-\sum_{v\in V} \vert N(b_v)\vert\\ k-\vert V\vert \end{matrix}\right]=\left[ \begin{matrix}n-\vert V\vert\\ k-\vert V\vert \end{matrix}\right], \notag
\end{equation}
and
\begin{align*}
\vert \mathcal A\vert=\sum_{1\leq i\leq s} (-1)^{i-1} \binom{s}{i}\left[ \begin{matrix}n-i\\ k-i \end{matrix}\right].
\end{align*}

Suppose $\vert N(b_t)\vert>1$ for some $t$. Without loss of generality, we may assume that $\vert N(b_1)\vert>1$, i.e., $b_1$ is a cycle of length at least 2. Now,
\begin{align*}
\vert \mathcal A\vert &=\left\vert \bigcup_{1\leq t\leq s} \mathcal A_t \right\vert \leq  \sum_{1\leq t\leq s} \left\vert \mathcal A_t \right\vert =\sum_{1\leq t\leq s} \left[ \begin{matrix}n-\vert N(b_t)\vert\\ k-1 \end{matrix}\right]\\
&=\left[ \begin{matrix}n-\vert N(b_1)\vert\\ k-1 \end{matrix}\right]+\sum_{2\leq t\leq s} \left[ \begin{matrix}n-\vert N(b_t)\vert\\ k-1 \end{matrix}\right]\\
&\leq \left[ \begin{matrix}n-2\\ k-1 \end{matrix}\right]+\sum_{2\leq t\leq s} \left[ \begin{matrix}n-1\\ k-1 \end{matrix}\right]\\
&= \left[ \begin{matrix}n-2\\ k-1 \end{matrix}\right]+(s-1)\left[ \begin{matrix}n-1\\ k-1 \end{matrix}\right].
\end{align*}

By Lemma \ref{lm_stirling_inequality2},
\begin{align*}
\left [ \begin{matrix} n-1\\ k-1\end{matrix}\right] > \frac{n-1}{(k-1)^2}\left [ \begin{matrix} n-2\\ k-2 \end{matrix}\right].
\end{align*}
Therefore, by Lemma \ref{lm_main_eq3}, 
\begin{align*}
\sum_{1\leq i\leq s} (-1)^{i-1} \binom{s}{i}\left[ \begin{matrix}n-i\\ k-i \end{matrix}\right]&\geq s\left[ \begin{matrix}n-1\\ k-1 \end{matrix}\right]-\frac{s(s-1)}{2}\left[ \begin{matrix}n-2\\ k-2 \end{matrix}\right]\\
&= (s-1)\left[ \begin{matrix}n-1\\ k-1 \end{matrix}\right]+\left(\left[ \begin{matrix}n-1\\ k-1 \end{matrix}\right]-\frac{s(s-1)}{2}\left[ \begin{matrix}n-2\\ k-2 \end{matrix}\right]\right)\\
&> (s-1)\left[ \begin{matrix}n-1\\ k-1 \end{matrix}\right]+\left(\frac{n-1}{(k-1)^2}-\frac{s(s-1)}{2}\right)\left[ \begin{matrix}n-2\\ k-2 \end{matrix}\right]\\
&>(s-1)\left[ \begin{matrix}n-1\\ k-1 \end{matrix}\right]+\left[ \begin{matrix}n-2\\ k-1 \end{matrix}\right]\geq \vert \mathcal A\vert ,
\end{align*}
provided that $\frac{n-1}{(k-1)^2}-\frac{s(s-1)}{2}>1$, which is true when $n\geq (sk)^2$.

This completes the proof of the lemma.
\end{proof}

\begin{thm}\label{thm_main} Let $k\geq 4$, $s\geq 1$ and $n\geq 4s^2k^6$. If $\mathcal A\subseteq S_{n,k}$ with $\nu_p(\mathcal A)\leq s$, then
\begin{align*}
\vert \mathcal A\vert\leq \sum_{1\leq i\leq s} (-1)^{i-1} \binom{s}{i}\left[ \begin{matrix}n-i\\ k-i \end{matrix}\right].
\end{align*}
Moreover,  equality holds if and only if $\nu_p(\mathcal A)=s$ and there exists a $s$-set $T\subseteq [n]$ such that
\begin{align*}
\mathcal A=\bigcup_{t\in T} \mathcal A_t,
\end{align*}
where
\begin{align*}
\mathcal A_t=\left\{ \pi\in S_{n,k}\ :\ (t)\in M(\pi)  \right\}. 
\end{align*}
\end{thm}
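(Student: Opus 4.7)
The plan is to identify the \emph{popular} cycles---those contained in many permutations of $\mathcal A$---show that there are at most $s$ of them, and argue that $\mathcal A$ is essentially supported on the popular stars. Throughout, write $\Sigma_s = \sum_{1 \leq i \leq s}(-1)^{i-1}\binom{s}{i}\left[\begin{matrix} n-i \\ k-i \end{matrix}\right]$ for the target right-hand side, and set the threshold $\tau = \sqrt{n}\left[\begin{matrix} n-2 \\ k-2 \end{matrix}\right]$ (the same threshold appearing in Lemmas~\ref{lm_main}--\ref{lm_main3}). For each cycle $b$ in $S_n$ let $\mathcal A(b) = \{\pi \in \mathcal A : b \in M(\pi)\}$ and call $b$ \emph{popular} if $|\mathcal A(b)| \geq \tau$; let $P$ denote the set of popular cycles. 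To prove $|P| \leq s$, suppose for contradiction that $b_1, \ldots, b_{s+1}$ are popular. At most $s\left[\begin{matrix} n-2 \\ k-2 \end{matrix}\right] < \tau$ members of $\mathcal A(b_{s+1})$ can contain any of $b_1,\ldots,b_s$, so there is a $\pi^* \in \mathcal A(b_{s+1})$ with $b_1,\ldots,b_s \notin M(\pi^*)$. Lemma~\ref{lm_main3} with $r = 1$, $l = s$, $H = \{\pi^*\}$, $\mathcal A_i = \mathcal A(b_i)$ then produces $\pi_i \in \mathcal A(b_i)$ such that $\{\pi^*, \pi_1, \ldots, \pi_s\}$ is a matching of size $s+1$ in $\mathcal A$, contradicting $\nu_p(\mathcal A) \leq s$.

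Next, let $H$ be a maximum matching in $\mathcal A$ and $C = \bigcup_{\pi \in H} M(\pi)$, so $|C| \leq sk$; every $\pi \in \mathcal A$ must share a cycle with some element of $H$. Write $\mathcal S(b) = \{\pi \in S_{n,k} : b \in M(\pi)\}$, $\mathcal A^P = \mathcal A \cap \bigcup_{b \in P}\mathcal S(b)$ and $\mathcal A_0 = \mathcal A \setminus \mathcal A^P$. Every $\pi \in \mathcal A_0$ uses only unpopular cycles and therefore contains some $c \in C \setminus P$, giving $|\mathcal A_0| \leq \sum_{c \in C \setminus P} |\mathcal A(c)| < sk\tau$. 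The bulk, $|\mathcal A^P| \leq \left|\bigcup_{b \in P} \mathcal S(b)\right|$, is controlled by Lemma~\ref{lm_main_eq2} applied to the cycles in $P$.

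The proof then splits into three cases on $t := |P| \leq s$. \emph{Case A} ($t = s$ and every $b \in P$ is a fixed point, say $P = \{(t_1), \ldots, (t_s)\}$): the key observation is $\mathcal A_0 = \varnothing$, because any $\pi_0 \in \mathcal A_0$ satisfies $(t_i) \notin M(\pi_0)$ for all $i$, so Lemma~\ref{lm_main3} with $r=1$, $l=s$, $H = \{\pi_0\}$ once again produces a matching of size $s+1$ in $\mathcal A$. Hence $\mathcal A \subseteq \bigcup_{t \in T} \mathcal A_t$ with $T = \{t_1, \ldots, t_s\}$, and Lemma~\ref{lm_main_eq2} (equality case, since all popular cycles have length $1$) gives $|\mathcal A| \leq \Sigma_s$ with equality iff $\mathcal A = \bigcup_{t \in T} \mathcal A_t$; a further application of Lemma~\ref{lm_main3} shows $\nu_p(\mathcal A) = s$ in this case. \emph{Case B1} ($t < s$): the trivial union bound $|\mathcal A^P| \leq t\left[\begin{matrix} n-1 \\ k-1 \end{matrix}\right]$ combined with Lemmas~\ref{lm_stirling_inequality2} and~\ref{lm_main_eq3} gives
\[
\Sigma_s - |\mathcal A| \;\geq\; \left(\frac{n-1}{(k-1)^2} - \frac{s(s-1)}{2} - sk\sqrt{n}\right)\left[\begin{matrix} n-2 \\ k-2 \end{matrix}\right] \;>\; 0
\]
whenever $n \geq 4s^2 k^6$. \emph{Case B2} ($t = s$ but some $b \in P$ has $|N(b)| \geq 2$): as in the proof of Lemma~\ref{lm_main_eq2}, $|\mathcal A^P| \leq \left[\begin{matrix} n-2 \\ k-1 \end{matrix}\right] + (s-1)\left[\begin{matrix} n-1 \\ k-1 \end{matrix}\right]$, and a parallel computation via the recurrence (\ref{eq_recurrence}) and Lemmas~\ref{lm_stirling_inequality}--\ref{lm_stirling_inequality2} shows that $\Sigma_s - |\mathcal A^P|$ comfortably exceeds $sk\tau$ under the same hypothesis.

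The main obstacle is exactly this case split: verifying that the slack in Lemma~\ref{lm_main_eq2}'s bound (created either by fewer than $s$ popular cycles or by at least one popular cycle of length $\geq 2$) always dominates the unpopular contribution $sk\tau$. The assumption $n \geq 4s^2 k^6$ is calibrated precisely so that $sk\sqrt{n} < \frac{n}{k^2}$ with room to spare, which is what each of the slack estimates ultimately reduces to.
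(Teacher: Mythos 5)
Your proof is correct, and while it uses exactly the paper's toolkit (the threshold $\tau=\sqrt{n}\left[\begin{matrix}n-2\\ k-2\end{matrix}\right]$, Lemma~\ref{lm_main3} for extending matchings, and Lemmas~\ref{lm_main_eq2}--\ref{lm_main_eq3} for the counting), the structural step is organized genuinely differently. The paper fixes a maximum matching $\{\pi_1,\dots,\pi_r\}$, defines popularity only for the $rk$ cycles $c_{ij}$ occurring in it, proves that each $\pi_i$ carries at most one popular cycle, and then shows $\mathcal A$ is covered by the $\le r$ popular stars together with the rows having no popular cycle. You instead define popularity globally over all cycles of $S_n$, prove directly that there are at most $s$ popular cycles (again via Lemma~\ref{lm_main3}), and split $\mathcal A$ into the part meeting a popular cycle and the remainder $\mathcal A_0$, which you bound by $sk\tau$ by routing it through the unpopular cycles of a maximum matching. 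Your version buys a cleaner equality analysis: in your Case A you get $\mathcal A_0=\varnothing$ outright and $\mathcal A\subseteq\bigcup_{t\in T}\mathcal A_t$ with all popular cycles being fixed points, so the equality characterization (including $\nu_p(\mathcal A)=s$) drops out of Lemma~\ref{lm_main_eq2} immediately, whereas the paper's Case 2.2 leaves the $r<s$ possibility and the verification that $\mathcal A$ is the \emph{full} union of singleton stars somewhat implicit. The one place you should write out in full is Case B2: the slack $\Sigma_s-\left[\begin{matrix}n-2\\ k-1\end{matrix}\right]-(s-1)\left[\begin{matrix}n-1\\ k-1\end{matrix}\right]$ must exceed $sk\tau$, and since $\left[\begin{matrix}n-2\\ k-1\end{matrix}\right]$ is typically much larger than $\left[\begin{matrix}n-2\\ k-2\end{matrix}\right]$ it cannot be absorbed crudely; however, the recurrence (\ref{eq_recurrence}) gives $\left[\begin{matrix}n-1\\ k-1\end{matrix}\right]-\left[\begin{matrix}n-2\\ k-1\end{matrix}\right]=\left[\begin{matrix}n-2\\ k-2\end{matrix}\right]+(n-3)\left[\begin{matrix}n-2\\ k-1\end{matrix}\right]\ge\bigl(1+\frac{n-3}{(k-2)^2}\bigr)\left[\begin{matrix}n-2\\ k-2\end{matrix}\right]$ by Lemma~\ref{lm_stirling_inequality}, which comfortably closes the computation under $n\ge 4s^2k^6$, so the claim stands.
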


\begin{proof} Let $\{\pi_1,\pi_2,\dots, \pi_r\}$ be a matching in $\mathcal A$ with $\nu_p(\mathcal A)=r$. This means for each $\pi\in\mathcal A$, $M(\pi)\cap M(\pi_j)\neq \varnothing$ for some $j\in [r]$. Since $\nu_p(\mathcal A)\leq s$, we have $r\leq s$. Let
\begin{align*}
\pi_1 &=c_{11}c_{12}\cdots c_{1k};\\
\pi_2 &=c_{21}c_{22}\cdots c_{2k};\\
&\vdots\\
\pi_r &=c_{r1}c_{r2}\cdots c_{rk},
\end{align*}
be their cycle decompositions in $S_{n,k}$. Now, for each $1\leq i\leq r$ and $1\leq j\leq k$, let
\begin{align*}
\mathcal A_{ij}=\left\{ \pi\in \mathcal A\ :\ c_{ij}\in M(\pi)\right\}.
\end{align*}
Then $$\mathcal A=\bigcup_{\substack{1\leq i\leq r,\\ 1\leq j\leq k}} \mathcal A_{ij}.$$
We distinguish two cases.

\vskip 0.5cm
\noindent
{\bf Case 1.} Suppose $\vert \mathcal A_{ij}\vert <\sqrt{n}\left [ \begin{matrix}n-2\\ k-2 \end{matrix}\right]$ for all $i,j$.

Note that 
\begin{align*}
\vert \mathcal A\vert &\leq \sum_{\substack{1\leq i\leq r,\\ 1\leq j\leq k}} \vert \mathcal A_{ij}\vert<\sum_{\substack{1\leq i\leq r,\\ 1\leq j\leq k}}\sqrt{n}\left [ \begin{matrix}n-2\\ k-2 \end{matrix}\right]=rk\sqrt{n}\left [ \begin{matrix}n-2\\ k-2 \end{matrix}\right]\leq sk\sqrt{n}\left [ \begin{matrix}n-2\\ k-2 \end{matrix}\right].
\end{align*}
On the other hand, by Lemma \ref{lm_main_eq3},
\begin{align*}
\sum_{1\leq i\leq s} (-1)^{i-1} \binom{s}{i}\left[ \begin{matrix}n-i\\ k-i \end{matrix}\right]&\geq s\left[ \begin{matrix}n-1\\ k-1 \end{matrix}\right]-\frac{s(s-1)}{2}\left[ \begin{matrix}n-2\\ k-2 \end{matrix}\right]\\
&>s\left[ \begin{matrix}n-1\\ k-1 \end{matrix}\right]-s\sqrt{n}\left[ \begin{matrix}n-2\\ k-2 \end{matrix}\right],
\end{align*}
provided that $n\geq s^2$. Therefore, $$\vert \mathcal A\vert <\sum_{1\leq i\leq s} (-1)^{i-1} \binom{s}{i}\left[ \begin{matrix}n-i\\ k-i \end{matrix}\right],$$ if and only if
\begin{align}
\left[ \begin{matrix}n-1\\ k-1 \end{matrix}\right]\geq (k+1)\sqrt{n}\left[ \begin{matrix}n-2\\ k-2 \end{matrix}\right].\label{eq_thm_main}
\end{align}
By Lemma  \ref{lm_stirling_inequality2},
\begin{align*}
\left [ \begin{matrix} n-1\\ k-1\end{matrix}\right] > \frac{n-1}{(k-1)^2}\left [ \begin{matrix} n-2\\ k-2 \end{matrix}\right].
\end{align*}
Thus, equation (\ref{eq_thm_main}) follows by noticing that
\begin{align*}
 \frac{n-1}{(k-1)^2}\geq (k+1)\sqrt{n},
\end{align*}
which is equivalent to $\sqrt{n}-\frac{1}{\sqrt{n}}\geq (k^2-1)(k-1)$ and this is true provided that $n\geq k^6$.

\vskip 0.5cm
\noindent
{\bf Case 2.} Suppose $\vert \mathcal A_{ij}\vert \geq \sqrt{n}\left [ \begin{matrix}n-2\\ k-2 \end{matrix}\right]$ for some $i,j$. 

We claim that for each $i$, there is at most one $j$ with $\vert \mathcal A_{ij}\vert \geq \sqrt{n}\left [ \begin{matrix}n-2\\ k-2 \end{matrix}\right]$. Suppose 
 $\vert \mathcal A_{ij}\vert \geq\sqrt{n}\left [ \begin{matrix}n-2\\ k-2 \end{matrix}\right]$ and  $\vert \mathcal A_{ij'}\vert \geq \sqrt{n}\left [ \begin{matrix}n-2\\ k-2 \end{matrix}\right]$ for some $j\neq j'$.

Let $H=\{\pi_x \ :\ 1\leq x\leq r, x\neq i\}$. Then $H$ is a matching of size $r-1$. Now, 
\begin{align*}
\mathcal A_{ij}\subseteq \left\{\pi\in S_{n,k}\ :\ c_{ij}\in M(\pi)  \right\};\\
\mathcal A_{ij'}\subseteq \left\{\pi\in S_{n,k}\ :\ c_{ij'}\in M(\pi)  \right\}.
\end{align*}
Since $\{\pi_1,\pi_2,\dots, \pi_r\}$ is a matching, we have $c_{ij},c_{ij'}\notin M(\pi)$ for all $\pi\in H$.

Noting that $n\geq (sk)^2+1\geq (rk)^2+1$, it follows from Lemma \ref{lm_main3}  that there exist $\pi_1'\in \mathcal A_{ij}$ and $\pi_2'\in \mathcal A_{ij'}$  
 such that $\{\pi_1',\pi_2'\}\cup H$ is a matching. This means $\mathcal A$ has a matching of size $r+1$, which is not possible as $\nu_p(\mathcal A)=r$. 
Hence, we have shown that for each $i$, there is at most one $j$ for which  $\vert \mathcal A_{ij}\vert \geq \sqrt{n}\left [ \begin{matrix}n-2\\ k-2 \end{matrix}\right]$.

By relabelling if necessary, we may assume that for each $1\leq i\leq l$, there is a unique $j_i$ such that $\vert \mathcal A_{ij_i}\vert \geq \sqrt{n}\left [ \begin{matrix}n-2\\ k-2 \end{matrix}\right]$ and for each $l+1\leq i\leq r$, $\vert \mathcal A_{ij}\vert < \sqrt{n}\left [ \begin{matrix}n-2\\ k-2 \end{matrix}\right]$ for all $j$. 

Next, we show that
\begin{align}
\mathcal A=\bigcup_{\substack{1\leq i\leq l}} \mathcal A_{ij_i} \cup \bigcup_{\substack{l+1\leq i\leq r,\\ 1\leq j\leq k}} \mathcal A_{ij}.\label{eq_main_set}
\end{align}
Clearly, 
\begin{align*}
\bigcup_{\substack{1\leq i\leq l}} \mathcal A_{ij_i} \cup \bigcup_{\substack{l+1\leq i\leq r,\\ 1\leq j\leq k}} \mathcal A_{ij}\subseteq \mathcal A.
\end{align*}
It is sufficient to show that 
\begin{align*}
\mathcal A\subseteq \bigcup_{\substack{1\leq i\leq l}} \mathcal A_{ij_i} \cup \bigcup_{\substack{l+1\leq i\leq r,\\ 1\leq j\leq k}} \mathcal A_{ij}.
\end{align*}
 Let $\alpha\in \mathcal A$. Suppose $\alpha\notin \bigcup_{\substack{1\leq i\leq l}} \mathcal A_{ij_i} \cup \bigcup_{\substack{l+1\leq i\leq r, 1\leq j\leq k}} \mathcal A_{ij}$. Now, $\alpha\notin \bigcup_{\substack{l+1\leq i\leq r, 1\leq j\leq k}} \mathcal A_{ij}$ implies that
\begin{align*}
H_1=\{\alpha, \pi_{l+1},\pi_{l+2},\dots, \pi_r\},
\end{align*}
is a matching of size $r-l+1$. 

Note that
\begin{align*}
\mathcal A_{1j_1}&\subseteq \left\{\pi\in S_{n,k}\ :\ c_{1j_1}\in M(\pi)  \right\};\\
\mathcal A_{2j_2}&\subseteq \left\{\pi\in S_{n,k}\ :\ c_{1j_2}\in M(\pi)     \right\};\\
&\vdots\\
\mathcal A_{lj_l}&\subseteq \left\{\pi\in S_{n,k}\ :\ c_{1j_l}\in M(\pi)    \right\}.
\end{align*}
Since $\{\pi_1,\pi_2,\dots, \pi_r\}$ is a matching, $c_{ij_i}\neq c_{i'j_{i'}}$ for all $i\neq i'$ and  $c_{ij_i}\notin M(\pi)$ for all $\pi\in H_1\setminus \{\alpha\}$ and all $i$. 
It follows from  $\alpha\notin \bigcup_{\substack{1\leq i\leq l}} \mathcal A_{ij_i}$ that $c_{ij_i}\notin M(\alpha)$. Thus, for each $i$, $c_{ij_i}\notin M(\pi)$ for all $\pi\in H_1$. 
 By Lemma \ref{lm_main3} and $n\geq (sk)^2+1\geq (rk)^2+1=(((r-l+1)+l-1)k)^2+1$, there exist $\mathbf \beta_i\in \mathcal A_{ij_i}$ for $1\leq i\leq l$, such that 
$$\{\beta_1,\beta_2,\dots, \beta_l\}\cup H_1=\{\beta_1,\beta_2,\dots, \beta_l,\alpha, \pi_{l+1},\pi_{l+2},\dots, \pi_r\},$$ is a matching of size $r+1$, contradicting  $\nu_p(\mathcal A)=r$. Hence,  (\ref{eq_main_set}) must be true.

We consider two cases.

\vskip 0.5cm
\noindent
{\bf Case 2.1.} Suppose $1\leq l\leq r-1$. Then $2\leq r\leq s$. 

Note that
\begin{align*}
\left\vert \bigcup_{\substack{1\leq i\leq l}} \mathcal A_{ij_i}\right\vert&\leq  \sum_{\substack{1\leq i\leq l}} \left\vert\mathcal A_{ij_i}\right\vert\leq  \sum_{\substack{1\leq i\leq l}} \left [ \begin{matrix} n-\vert N(c_{ij_i})\vert\\ k-1\end{matrix}\right] \leq  \sum_{\substack{1\leq i\leq l}} \left [ \begin{matrix} n-1\\ k-1\end{matrix}\right]\\
& =l\left [ \begin{matrix} n-1\\ k-1\end{matrix}\right]\leq (r-1)\left [ \begin{matrix} n-1\\ k-1\end{matrix}\right]\leq (s-1)\left [ \begin{matrix} n-1\\ k-1\end{matrix}\right].
\end{align*}
On the other hand,
\begin{align*}
\left\vert \bigcup_{\substack{l+1\leq i\leq r,\\ 1\leq j\leq k}} \mathcal A_{ij} \right\vert<(r-l)k\sqrt{n}\left [ \begin{matrix}n-2\\ k-2 \end{matrix}\right]<sk\sqrt{n}\left [ \begin{matrix}n-2\\ k-2 \end{matrix}\right].
\end{align*}
Therefore, by (\ref{eq_main_set}) and Lemmas  \ref{lm_stirling_inequality2} and \ref{lm_main_eq3},
\begin{align*}
\left\vert \mathcal A\right\vert&\leq \left\vert \bigcup_{\substack{1\leq i\leq l}} \mathcal A_{ij_i}\right\vert+\left\vert \bigcup_{\substack{l+1\leq i\leq r,\\ 1\leq j\leq k}} \mathcal A_{ij} \right\vert<(s-1)\left [ \begin{matrix} n-1\\ k-1\end{matrix}\right]+sk\sqrt{n}\left [ \begin{matrix}n-2\\ k-2 \end{matrix}\right]\\
&=s\left [ \begin{matrix} n-1\\ k-1\end{matrix}\right]+\left(sk\sqrt{n}\left [ \begin{matrix}n-2\\ k-2 \end{matrix}\right]-\left [ \begin{matrix} n-1\\ k-1\end{matrix}\right]\right)<s\left [ \begin{matrix} n-1\\ k-1\end{matrix}\right]+\left(sk\sqrt{n}-\frac{n-1}{(k-1)^2}\right)\left [ \begin{matrix}n-2\\ k-2 \end{matrix}\right]\\
&\leq s\left [ \begin{matrix} n-1\\ k-1\end{matrix}\right]-sk\sqrt{n}\left [ \begin{matrix}n-2\\ k-2 \end{matrix}\right]<s\left[ \begin{matrix}n-1\\ k-1 \end{matrix}\right]-\frac{s(s-1)}{2}\left[ \begin{matrix}n-2\\ k-2 \end{matrix}\right]\\
&\leq \sum_{1\leq i\leq s} (-1)^{i-1} \binom{s}{i}\left[ \begin{matrix}n-i\\ k-i \end{matrix}\right],
\end{align*}
provided that $sk\sqrt{n}-\frac{n-1}{(k-1)^2}\leq -sk\sqrt{n}$, which holds for $n\geq 4s^2k^6$.

\vskip 0.5cm
\noindent
{\bf Case 2.2.} Suppose $l=r$. 

Then (\ref{eq_main_set}) becomes
\begin{align*}
\mathcal A=\bigcup_{\substack{1\leq i\leq r}} \mathcal A_{ij_i}.
\end{align*}
By Lemma \ref{lm_main_eq2}, 
\begin{align*}
\vert \mathcal A\vert\leq \sum_{1\leq i\leq s} (-1)^{i-1} \binom{s}{i}\left[ \begin{matrix}n-i\\ k-i \end{matrix}\right],
\end{align*}
and equality holds if and only if $\vert N(b_t)\vert=1$ for all $t$, i.e., each $b_t$ is a cycle of length 1.

This completes the proof of the theorem.
\end{proof}

The following corollary is an immediate consequence of Theorem \ref{thm_main}.

\begin{cor}\label{cor_main} Let $k\geq 4$ and  $s\geq 1$. If $n\geq 4s^2k^6$, then 
\begin{align*}
\textnormal{EMC}_p(n,k,s)=\sum_{1\leq i\leq s} (-1)^{i-1} \binom{s}{i}\left[ \begin{matrix}n-i\\ k-i \end{matrix}\right].
\end{align*}
\end{cor}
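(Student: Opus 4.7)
The plan is to establish the two inequalities defining $\textnormal{EMC}_p(n,k,s)$ separately. The upper bound is handed to us directly by Theorem \ref{thm_main}: any $\mathcal A \subseteq S_{n,k}$ with $\nu_p(\mathcal A)\leq s$ satisfies
\[ |\mathcal A|\leq \sum_{1\leq i\leq s}(-1)^{i-1}\binom{s}{i}\left[\begin{matrix}n-i\\ k-i\end{matrix}\right], \]
so it remains to exhibit a family in $S_{n,k}$ with matching number at most $s$ whose size equals this value.

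For the lower bound, I would fix any $s$-element subset $T\subseteq [n]$ (which exists since $n\geq 4s^2k^6\geq s$) and take
\[ \mathcal A=\bigcup_{t\in T}\mathcal A_t,\qquad \mathcal A_t=\{\pi\in S_{n,k}:(t)\in M(\pi)\}. \]
Two things need to be checked. First, $\nu_p(\mathcal A)\leq s$: if $\pi_1,\dots,\pi_m$ is a matching in $\mathcal A$, then each $\pi_i$ contains some fixed cycle $(t_i)$ with $t_i\in T$, and the matching condition forces the $t_i$ to be pairwise distinct, so $m\leq|T|=s$. Second, since each $b_t:=(t)$ is a cycle of length $1$ and the $b_t$ are distinct, the equality case of Lemma \ref{lm_main_eq2} (whose hypotheses $k\geq 3$ and $n\geq (sk)^2$ are comfortably satisfied) yields
\[ |\mathcal A|=\sum_{1\leq i\leq s}(-1)^{i-1}\binom{s}{i}\left[\begin{matrix}n-i\\ k-i\end{matrix}\right]. \]

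Combining the two bounds gives the stated value of $\textnormal{EMC}_p(n,k,s)$. There is essentially no obstacle: the only non-cosmetic point is the verification that the matching number of the constructed family is at most $s$, and this reduces to a one-line pigeonhole on the labels in $T$. One could alternatively skip the appeal to Lemma \ref{lm_main_eq2} and compute $|\mathcal A|$ by inclusion–exclusion directly, using $\bigl|\bigcap_{v\in V}\mathcal A_v\bigr|=\bigl[\begin{matrix}n-|V|\\ k-|V|\end{matrix}\bigr]$ for every $V\subseteq T$, which gives the same closed form.
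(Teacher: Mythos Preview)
Your proposal is correct and follows the same route the paper intends: the paper records the corollary as ``an immediate consequence of Theorem~\ref{thm_main}'', relying on the inequality for the upper bound and on the extremal family described in the equality clause for the lower bound. You have simply made explicit what the paper leaves implicit---the pigeonhole check that $\nu_p(\mathcal A)\le s$ for the family $\bigcup_{t\in T}\mathcal A_t$, and the size computation via Lemma~\ref{lm_main_eq2} (or the direct inclusion--exclusion)---so there is no substantive difference in approach.
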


{\footnotesize

}
\end{document}